\documentclass[12 pt]{amsart}

\usepackage[margin=2.3cm]{geometry} 
\usepackage[utf8]{inputenc}
\usepackage{amsmath,amssymb}
\usepackage[colorlinks=true,
linkcolor=blue,
citecolor=blue,
urlcolor=blue]{hyperref}

\newtheorem{theorem}{Theorem}

\theoremstyle{definition}

\theoremstyle{remark}

\newcommand{\N}{\mathbb{N}}

\begin{document}

\title[$P$-Contractive implies Picard  ]
{A Short Note on  \(P\)-Contractive  and
	 Picard Operators}

\author{Faruk Temur}
\address{Department of Mathematics\\
	Izmir Institute of Technology, Urla, Izmir, 35430,
	Turkey}
\email{faruktemur@iyte.edu.tr}
\keywords{P-Contractive operators, Picard operators, Compact metric spaces}
\subjclass[2020]{Primary: 54H25, Secondary: 47H10}
\date{August 18, 2026}

\maketitle

	\maketitle
	
	\begin{abstract}
	Altun and Hancer \cite{AltunHancer2019} proved that every continuous \(P\)-contractive self-map of a
		compact metric space is an almost Picard operator, and asked whether it must in
		fact be a Picard operator. We give an affirmative answer. The proof follows by combining the existing results with a final compactness argument.
	\end{abstract}
	
	\section{Introduction}
	
Let \((X,d)\)  be a metric space. 	A map \(T\colon X\to X\) is called
\emph{\(P\)-contractive} if
\begin{equation}\label{eq:Pcontractive}
	d(Tx,Ty)
	< d(x,y)+\bigl|d(x,Tx)-d(y,Ty)\bigr|
\end{equation}
for all distinct \(x,y\in X\). 	On the other hand, a  map \(T\colon X\to X\)   is called a \emph{Picard operator} if it has a unique
fixed point $z$,
 and
\[T^n x\rightarrow z, \quad \text{for every} \ \ x\in X.\]
 For a map \(T\colon X\to X\),  and a  given point
\(x_0\in X\), the sequence
\[
x_{n+1}=Tx_n,\qquad n\ge 0,
\]
is called the \emph{Picard iteration} or \emph{Picard orbit} generated by
\(x_0\).

In \cite{AltunHancer2019},	Altun and Hancer proved
	that a continuous \(P\)-contractive self-map of a compact metric space has a
	unique fixed point, and that every Picard orbit has a subsequence converging to
	that point; in their terminology, such a map is an almost Picard operator. They asked whether the full Picard conclusion holds.
	The same question was recorded as open by Başar and Altun
	\cite{BasarAltun2026}. We answer this question  affirmatively.

	\begin{theorem}\label{thm:main}
		Let \((X,d)\) be a nonempty compact metric space, and let
		\(T\colon X\to X\) be a continuous \(P\)-contractive mapping. Then \(T\) is a
		Picard operator.
	\end{theorem}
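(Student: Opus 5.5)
The plan is to take from Altun and Hancer \cite{AltunHancer2019} that \(T\) has a unique fixed point \(z\) and that every Picard orbit has a subsequence converging to \(z\). Fix \(x_0\in X\) and write \(x_n=T^nx_0\). It remains to prove that the whole sequence \((x_n)\) converges to \(z\). I would do this in three steps: show that the step lengths \(a_n=d(x_n,x_{n+1})\) are nonincreasing, show that they tend to \(0\), and conclude with a compactness argument on the set of limit points of the orbit.

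\emph{Step 1: monotonicity.} If \(x_n=x_{n+1}\) for some \(n\), then \(x_n\) is a fixed point, so \(x_m=z\) for all \(m\ge n\) and there is nothing to prove. Otherwise \(x_n\neq x_{n+1}\) for all \(n\). Apply \eqref{eq:Pcontractive} with \(x=x_n\) and \(y=x_{n+1}\):
\[
a_{n+1}<a_n+|a_n-a_{n+1}|.
\]
If \(a_{n+1}>a_n\), the right-hand side equals \(a_{n+1}\), which is a contradiction. Hence \(a_{n+1}\le a_n\), and \((a_n)\) decreases to some limit \(a\ge0\).

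\emph{Step 2: the limit is zero.} Choose a subsequence \(x_{n_k}\to z\). The map \(x\mapsto d(x,Tx)\) is continuous because \(T\) is continuous. Therefore
\[
a_{n_k}=d(x_{n_k},Tx_{n_k})\to d(z,Tz)=0 .
\]
Since \((a_n)\) is monotone, this forces \(a=0\), so \(a_n\to0\).

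\emph{Step 3: compactness.} Let \(w\) be any cluster point of \((x_n)\), say \(x_{m_j}\to w\). By continuity again,
\[
d(w,Tw)=\lim_j d(x_{m_j},x_{m_j+1})=\lim_j a_{m_j}=0,
\]
so \(w\) is a fixed point. Uniqueness of the fixed point gives \(w=z\). Thus \(z\) is the only cluster point of \((x_n)\). Since \(X\) is compact, every subsequence of \((x_n)\) has a further subsequence converging to some cluster point, which must be \(z\). Hence \(x_n\to z\). As \(x_0\) was arbitrary, \(T\) is a Picard operator.

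The only step with real content is Step 1, the observation that \eqref{eq:Pcontractive} applied to consecutive iterates forces the steps to be nonincreasing. Once that is established, the almost-Picard result supplies the limit \(0\), and the remaining compactness argument is routine.
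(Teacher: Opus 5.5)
Your proof is correct and follows the same skeleton as the paper: asymptotic regularity of every orbit, then every cluster point is a fixed point (the paper cites Garnicki's Lemma 1.3, which you reprove directly from continuity of $x\mapsto d(x,Tx)$), then uniqueness of the fixed point plus the same compactness argument. The only difference is where $d(x_n,x_{n+1})\to 0$ comes from. The paper quotes it from inside the proof of Altun and Hancer's Theorem 2. You derive it from the \emph{statement} of their almost-Picard result, combined with the monotonicity of the step lengths that \eqref{eq:Pcontractive} forces, so your argument relies only on their published conclusion.
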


 The proof of the  theorem  has  3 steps, and fortunately first two of these already exist in the literature \cite{AltunDurmazOlgun2018,AltunHancer2019,Garnicki2019}. So all we need to do is to combine the first two, and build  the final step on that combination.

	\section{Proof of the Main result}
	In this section we prove our main theorem.

\begin{proof}[Proof of the Main Theorem] Our three steps are as follows.\\

{\bf  I. } The map $T$ has a unique fixed point. This is known by Theorem 2.14 \cite{AltunDurmazOlgun2018}.\\

{\bf II.} Every Picard orbit is asymptotically regular, that is $d(x_{n-1},x_{n})\rightarrow 0, \ n\in \N $ for any $x_0\in X$. This is shown in the proof of Theorem 2 in  \cite{AltunHancer2019}. As Lemma 1.3 of \cite{Garnicki2019} states, this implies that every limit point of a Picard orbit is a fixed point.  Since $T$ has a unique fixed point, any limit point of any Picard orbit is this point.\\

\textbf{III.} By first two steps we know that $T$ has a unique fixed point $z$, and any limit point of any Picard orbit is this point. It remains to show that any Picard orbit converges to this point. Pick an arbitrary $x_0\in X$ and consider its orbit $\{x_n\}.$
 If	this orbit did not converge to \(z\), there would exist \(\varepsilon>0\) and a
		subsequence \((x_{n_j})\) such that
		$
		d(x_{n_j},z)\ge\varepsilon$  for every $
		j\in \N.
		$
		But compactness of $X$ would provide a further subsequence converging to $z$, contradicting this. Hence any orbit converges to $z.$ This finishes the proof.

	\end{proof}

\end{document}